\documentclass[a4paper,oneside,10pt]{article}

\usepackage[english]{babel}
\usepackage[latin1]{inputenc}
\usepackage[T1]{fontenc}
\usepackage{lmodern}
\usepackage{cite}
\usepackage{microtype}
\usepackage{amssymb}
\usepackage{amsmath}
\usepackage{mathrsfs}
\usepackage{dsfont}

\newcommand{\ind}[1]{\mathds{1}_{#1}}
\newcommand{\R}{\mathbb{R}}
\newcommand{\E}{\mathbb{E}}
\newcommand{\G}{\Gamma}
\renewcommand{\d}{\delta}
\newcommand{\eps}{\varepsilon}
\renewcommand{\r}{\rho}
\newcommand{\s}{\sigma}

\usepackage{amsthm}
\newtheorem{theo}{Theorem}
\newtheorem{lem}[theo]{Lemma}
\renewenvironment{proof}{\noindent{\bf Proof.}}{\qed}

\begin{document}

\renewcommand{\contentsname}{Contents}
\renewcommand{\refname}{\textbf{References}}
\renewcommand{\abstractname}{Abstract}

\begin{center}
{\bf \begin{Large}
Fluctuations of the Self-Normalized Sum\medskip

in the Curie-Weiss Model of SOC
\end{Large}}\bigskip \bigskip\bigskip\bigskip

\begin{large}
{\setlength{\tabcolsep}{15pt}
\begin{tabular}{p{3.8cm}p{1cm}p{3.8cm}}
\centering Matthias Gorny & \centering and &\centering S. R. S. Varadhan\tabularnewline
\centering{\it Universit\'e Paris Sud}
\centering & & \centering{\it Courant Institute}\tabularnewline
\centering{\it and ENS Paris}&&\centering{\it New York}
\end{tabular}}
\end{large}
\end{center}\bigskip \bigskip\bigskip

\begin{abstract}
\noindent We extend the main theorem of~\cite{CerfGorny} about the fluctuations in the Curie-Weiss model of SOC. We present a short proof using the Hubbard-Stratonovich transformation with the self-normalized sum of the random variables. 
\end{abstract}
\bigskip \bigskip \bigskip\bigskip

\noindent {\it AMS 2010 subject classifications:} 60F05  60K35

\noindent {\it Keywords:} Ising Curie-Weiss, SOC, Laplace's method

\bigskip\bigskip \bigskip
\section{Introduction}

In~\cite{CerfGorny}, Rapha\"el Cerf and Matthias Gorny designed a Curie-Weiss model of self-organized criticality. It is the model given by an infinite triangular array of real-valued random variables $(X_{n}^{k})_{1\leq k \leq n}$ such that for all $n \geq 1$, $(X^{1}_{n},\dots,X^{n}_{n})$ has the distribution
\[d\widetilde{\mu}_{n,\r}(x_{1},\dots,x_{n})=\frac{1}{Z_{n}}\exp\left(\frac{1}{2}\frac{(x_{1}+\dots+x_{n})^{2}}{x_{1}^{2}+\dots+x_{n}^{2}}\right)\ind{\{x_{1}^{2}+\dots+x_{n}^{2}>0\}}\,\prod_{i=1}^{n}d\r(x_{i}),\]
where $\r$ is a probability measure on $\R$ which is not the Dirac mass at~$0$, and where $Z_n$ is the normalization constant. This model is a modification of the generalized Ising Curie-Weiss model by the implementation of an automatic control of the inverse temperature. 

For any $n\geq 1$, we denote
\[S_{n}=X^{1}_{n}+\dots+X^{n}_{n},\qquad T_{n}=(X^{1}_{n})^{2}+\dots+(X^{n}_{n})^{2}.\]
By using Cram\'er's theory and Laplace's method, Cerf and Gorny proved in~\cite{CerfGorny} that, if $\r$ satisfies
\begin{equation}
\exists v_0>0\qquad \int_{\R}e^{v_0 z^2}\,d\r(z)<+\infty
\tag{$*$}
\end{equation}
and if $\r$ has a bounded density, then
\[\frac{S_n}{n^{3/4}}\overset{\mathscr{L}}{\underset{n \to \infty}{\longrightarrow}} \left(\frac{4\mu_{4}}{3\s^{8}}\right)^{1/4}\G\left(\frac{1}{4}\right)^{-1} \exp\left(-\frac{\mu_{4}}{12 \s^{8}}s^{4}\right)\,ds.\]
The case where $\r$ is a centered Gaussian measure has been studied in~\cite{GORGaussCase}. This fluctuation result shows that this model is a self-organized model exhibiting critical behaviour. Indeed it has the same behaviour as the critical generalized Ising Curie-Weiss model (see~\cite{EN}) and, by construction, it does not depend on any external parameter.

This result has been extended in~\cite{Gorny3} to the case where $\r$ satisfies some Cram\'er condition, which is fulfilled in particular when $\r$ has a an absolutely continuous component. However the proof is very technical and it does not deal with the case where $\r$ is discrete for example.

In this paper we prove that the convergence in distribution of $S_n/n^{3/4}$, under $\smash{\widetilde{\mu}_{n,\r}}$, is true for any symmetric probability measure $\r$ on $\R$ which satisfies~$(*)$. To this end, we study the fluctuations of the self-normalized sum $S_n/\sqrt{T_n}$. With this term, it is possible to use the so-called Hubbard-Stratonovich transformation as in lemma~3.3 of~\cite{EN}, which is the key ingredient for the proof of the fluctuations theorem in the generalized Ising Curie-Weiss model.

\begin{theo} Let $\r$ be a symmetric probability measure on $\R$ which is not the Dirac mass at~$0$ and which has a finite fifth moment. We denote by $\s^2$ the variance of~$\r$ and by $\mu_4$ its fourth moment. Then, under $\smash{\widetilde{\mu}_{n,\r}}$,
\[\frac{S_{n}}{n^{1/4}\sqrt{T_n}} \overset{\mathscr{L}}{\underset{n \to \infty}{\longrightarrow}} \left(\frac{4\mu_{4}}{3\s^{4}}\right)^{1/4}\G\left(\frac{1}{4}\right)^{-1} \exp\left(-\frac{\mu_{4}}{12 \s^{4}}s^{4}\right)\,ds.\]
\label{Maintheo}
\end{theo}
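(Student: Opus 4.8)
The strategy is to compute the distribution of the self-normalized sum $R_n := S_n/\sqrt{T_n}$ via the Hubbard-Stratonovich transformation, and then extract from it the limiting law of $S_n/(n^{1/4}\sqrt{T_n})$. I would begin by introducing an independent Gaussian auxiliary variable: if $W$ is standard normal and independent of $(X^1_n,\dots,X^n_n)$, then $\mathbb{E}[\exp(\lambda\sqrt{T_n}\,W)] = \exp(\lambda^2 T_n/2)$, so that under the *unweighted* product measure $\rho^{\otimes n}$ we can write $\exp\bigl(\tfrac12 S_n^2/T_n\bigr)$ as the expectation over $W$ of $\exp\bigl(W S_n/\sqrt{T_n} - W^2/2 + \text{(correction)}\bigr)$ — more precisely, the classical identity gives, for the density of $\widetilde\mu_{n,\rho}$, a representation in which $S_n/\sqrt{T_n}$ couples linearly to the Gaussian. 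Concretely, the joint density of $(X^1_n,\dots,X^n_n,W)$ under the tilted-and-Gaussian-augmented measure factorizes once one completes the square, and one obtains that $R_n + W/\sqrt{T_n}\cdot(\text{something})$ has an explicit conditional structure; the upshot (this is exactly the analogue of Lemma 3.3 of~\cite{EN}) is an identity expressing $\mathbb{E}_{\widetilde\mu_{n,\rho}}[f(R_n)]$ as a ratio of integrals over $\mathbb{R}$ of $f$ against a kernel built from $\int_{\mathbb{R}}(\cdots)\,d\rho$.

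Next I would do the asymptotic analysis of that kernel by Laplace's method. Writing things in terms of the variable rescaled as $t = s\,n^{1/4}$ (so $s = R_n/n^{1/4}$), the exponent in the Hubbard-Stratonovich integral becomes $n$ times a function $\varphi_n(t/\sqrt n)$ whose Taylor expansion, using symmetry of $\rho$ and finiteness of the fifth moment, reads $\varphi(u) = \tfrac12(\sigma^2 u^2) - \tfrac12 u^2 + \tfrac{?}{}$ — the quadratic terms must cancel exactly (this is the criticality of the model, the reason the self-normalization is the "right" object), leaving a leading quartic term proportional to $\mu_4\,u^4$. Because $S_n/\sqrt{T_n}$ is trivially bounded by $\sqrt n$ (Cauchy-Schwarz), and because $T_n/n \to \sigma^2$ in probability under the relevant measure (law of large numbers, which I would need to verify survives the tilting — the tilting factor is bounded between $1$ and $e^{n/2}$, but one controls it through the same Laplace analysis), the natural scaling is $s = R_n/n^{1/4}$, and the quartic term survives at order $1$ while all higher-order terms vanish. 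The fifth moment hypothesis is exactly what makes the error term in the Taylor expansion of $\varphi_n$ negligible after multiplication by $n$ and the $n^{-1/4}$ scaling.

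Carrying this through, the density of $s = S_n/(n^{1/4}\sqrt{T_n})$ converges pointwise (and then, with a domination/tightness argument, in distribution) to a density proportional to $\exp(-c\,s^4)$ with $c = \mu_4/(12\sigma^4)$; normalizing via $\int_{\mathbb{R}}\exp(-c s^4)\,ds = \tfrac12 c^{-1/4}\Gamma(1/4)$ yields exactly the constant $(4\mu_4/3\sigma^4)^{1/4}\Gamma(1/4)^{-1}$ claimed in the theorem. I would handle the passage from pointwise convergence of densities to convergence in law by dominating the pre-limit densities uniformly — e.g. bounding the Laplace kernel by an integrable quartic-exponential envelope uniformly in $n$ — together with tightness, which again comes for free from the deterministic bound $|R_n| \le \sqrt n$ once one shows no mass escapes, i.e. that $\mathbb{E}_{\widetilde\mu_{n,\rho}}[R_n^2/\sqrt n]$ stays bounded.

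The main obstacle I anticipate is not the Laplace method per se but justifying that the Hubbard-Stratonovich identity behaves well for a *general* symmetric $\rho$ satisfying $(*)$, in particular when $\rho$ is discrete: one cannot appeal to any smoothness of the density of $S_n$ or $T_n$, so the entire argument must be conducted at the level of the auxiliary Gaussian integral, where the relevant object $\int_{\mathbb{R}} e^{izx - x^2/(2\tau)}\,d\rho(x)$-type integrals are automatically smooth in the auxiliary parameters regardless of $\rho$. Making the Laplace asymptotics uniform enough to conclude convergence of the full distribution (and not merely of moments), while only assuming five moments rather than the exponential moment $(*)$ in the theorem statement — note the theorem weakens $(*)$ to a fifth-moment condition — will require care in the tail estimates: there the boundedness $|R_n|\le\sqrt n$ is essential, since it converts the missing exponential integrability into a crude but sufficient a priori bound on the range of the self-normalized sum.
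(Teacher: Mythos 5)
Your overall strategy (Hubbard--Stratonovich followed by Laplace's method) is the same as the paper's, and your identification of the scaling, of the quartic limit $\exp(-\mu_4 s^4/(12\sigma^4))$, and of the normalizing constant are all correct. But there is a genuine gap at the central step. After the change of variables in the Gaussian integral, the quantity to be analyzed is
\[
\E\left[\exp\left(zn^{1/4}\,\frac{Y_1+\dots+Y_n}{\sqrt{Y_1^2+\dots+Y_n^2}}\right)\right],
\]
and this does \emph{not} factorize into an $n$-th power of a one-dimensional integral $\int(\cdots)\,d\r$, because the denominator $\sqrt{T_n}$ couples all the coordinates. Your sketch implicitly assumes an Ellis--Newman-type kernel of the form $\exp(n\varphi_n(\cdot))$ with $\varphi_n$ built from a single integral against $\r$; that is exactly what fails here, and it is why the earlier proofs needed heavy machinery. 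The paper's key device --- which your proposal never identifies, and which is the only place the symmetry of $\r$ is used --- is to write $Y_i=\eps_i U_i$ with $U_i\sim\r$ and $\eps_i$ independent Rademacher signs, and to integrate out only the signs conditionally on $(U_1,\dots,U_n)$. This yields $\prod_{i}\cosh(zn^{1/4}A_{i,n})$ with $A_{i,n}=U_i/\sqrt{U_1^2+\dots+U_n^2}$, and then the identity $A_{1,n}^2+\dots+A_{n,n}^2=1$ cancels the quadratic term \emph{exactly}, leaving $\sum_i g(zn^{1/4}A_{i,n})$ with $g(y)=\ln\cosh y-y^2/2$. Your proposed cancellation ``$\tfrac12\sigma^2u^2-\tfrac12 u^2$'' is the wrong mechanism (it would require $\sigma^2=1$, and in any case it would need a law of large numbers for $T_n$ under the tilted measure, which the paper's route avoids entirely for this theorem).

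Two secondary points. First, your claim that tightness ``comes for free'' from $|S_n/\sqrt{T_n}|\le\sqrt n$ is not right: that bound only gives $|S_n/(n^{1/4}\sqrt{T_n})|\le n^{1/4}$, which grows with $n$ and yields nothing. The tail control in the paper comes instead from a uniform pointwise bound $\sum_i g(zn^{1/4}A_{i,n})\le -cz^4/(1+z^2/\sqrt n)$ (proved via a convexity inequality and, again, $\sum_i A_{i,n}^2=1$), which both dominates the integrand for $|z|\le n^{1/4}$ and kills the region $|z|>n^{1/4}$. Second, for discrete $\r$ the variable $S_n/(n^{1/4}\sqrt{T_n})$ has no density; what converges is the law of the Gaussian-smoothed variable $W/n^{1/4}+S_n/(n^{1/4}\sqrt{T_n})$, after which Slutsky removes the $W/n^{1/4}$. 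You gesture at this but then speak of pointwise convergence of the density of $S_n/(n^{1/4}\sqrt{T_n})$ itself, which should be corrected.
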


Remark: the hypothesis that $\r$ has a fifth moment may certainly be weakened by assuming instead that
\[\exists \eps>0\qquad \int_{\R}|z|^{4+\eps}\,d\r(z)<+\infty.\]

We prove theorem~\ref{Maintheo} in section~\ref{Proof}. If we add the hypothesis that $\r$ satisfies~$(*)$ then, under $\smash{\widetilde{\mu}_{n,\r}}$, $T_n/n$ converges in probability to $\s^2$. This result is proved in section~3 of~\cite{Gorny3} using Cram\'er's theorem, Varadhan's lemma (see~\cite{DZ}) and a conditioning argument. Moreover
\[\forall n\geq 1\qquad \frac{S_n}{n^{3/4}}=\sqrt{\frac{T_n}{n}}\times\frac{S_{n}}{n^{1/4}\sqrt{T_n}},\]
and condition $(*)$ implies that $\r$ has finite moments of all orders. Therefore the following theorem is a consequence of theorem~\ref{Maintheo} and Slutsky lemma (theorem~3.9~of~\cite{BillCVPM}).

\begin{theo} Let $\r$ be a symmetric probability measure on $\R$ which is not the Dirac mass at~$0$ and such that
\[\exists v_0>0\qquad \int_{\R}e^{v_0 z^2}\,d\r(z)<+\infty.\]
Then, under $\smash{\widetilde{\mu}_{n,\r}}$,
\[\frac{S_n}{n^{3/4}}\overset{\mathscr{L}}{\underset{n \to \infty}{\longrightarrow}} \left(\frac{4\mu_{4}}{3\s^{8}}\right)^{1/4}\G\left(\frac{1}{4}\right)^{-1} \exp\left(-\frac{\mu_{4}}{12 \s^{8}}s^{4}\right)\,ds.\]
\end{theo}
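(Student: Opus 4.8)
The plan is to derive this statement from Theorem~\ref{Maintheo} via the factorisation $S_n/n^{3/4}=\sqrt{T_n/n}\cdot S_n/(n^{1/4}\sqrt{T_n})$ together with Slutsky's lemma, and then to read off the limiting density by a linear change of variables.

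First, observe that the hypothesis $(*)$ forces $\r$ to have finite moments of every order, in particular a finite fifth moment, so Theorem~\ref{Maintheo} applies: under $\widetilde{\mu}_{n,\r}$,
\[\frac{S_n}{n^{1/4}\sqrt{T_n}}\ \overset{\mathscr L}{\underset{n\to\infty}{\longrightarrow}}\ Y,\]
where $Y$ has density $(4\mu_4/(3\s^4))^{1/4}\,\G(1/4)^{-1}\exp(-\mu_4 s^4/(12\s^4))\,ds$. Next, under the additional assumption $(*)$ one has $T_n/n\to\s^2$ in probability under $\widetilde{\mu}_{n,\r}$; this is established in section~3 of~\cite{Gorny3} by combining Cram\'er's theorem, Varadhan's lemma and a conditioning argument. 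Since $\r$ is not the Dirac mass at $0$ we have $\s>0$, hence $\sqrt{T_n/n}\to\s$ in probability by the continuous mapping theorem.

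Combining these two convergences through the identity $S_n/n^{3/4}=\sqrt{T_n/n}\times S_n/(n^{1/4}\sqrt{T_n})$, Slutsky's lemma (theorem~3.9 of~\cite{BillCVPM}) gives $S_n/n^{3/4}\to\s Y$ in distribution under $\widetilde{\mu}_{n,\r}$. Finally a one-line change of variables shows that $\s Y$ has density $\s^{-1}(4\mu_4/(3\s^4))^{1/4}\G(1/4)^{-1}\exp(-\mu_4 s^4/(12\s^8))\,ds$, and since $\s^{-1}(4\mu_4/(3\s^4))^{1/4}=(4\mu_4/(3\s^8))^{1/4}$ this is precisely the announced limit. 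The only genuinely non-formal ingredient is the convergence $T_n/n\to\s^2$, which is where the exponential integrability $(*)$ is really used — the polynomial moment condition of Theorem~\ref{Maintheo} would not be enough for the underlying large-deviations estimate — but since that is quoted from~\cite{Gorny3} there is no real obstacle left; one need only note that Slutsky's lemma is being applied along the varying family $(\widetilde{\mu}_{n,\r})_{n\geq 1}$, which is legitimate because for each fixed $n$ both factors are defined on the same probability space and the limit of the first factor is the deterministic constant $\s$.
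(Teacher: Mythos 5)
Your proposal is correct and follows exactly the paper's own route: combine Theorem~\ref{Maintheo} with the convergence $T_n/n\to\s^2$ in probability from section~3 of~\cite{Gorny3}, apply Slutsky's lemma to the factorisation $S_n/n^{3/4}=\sqrt{T_n/n}\cdot S_n/(n^{1/4}\sqrt{T_n})$, and rescale the limiting density. Nothing is missing.
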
\pagebreak

\section{Proof of theorem~\ref{Maintheo}}
\label{Proof}

Let $(X_{n}^{k})_{1\leq k \leq n}$ be an infinite triangular array of random variables such that, for any $n\geq 1$, $(X_n^1,\dots,X_n^n)$ has the law $\smash{\widetilde{\mu}_{n,\r}}$. Let us recall that
\[\forall n\geq 1\qquad S_n=X_n^1+\dots+X_n^n\qquad\mbox{and}\qquad T_n=(X_n^1)^2+\dots+(X_n^n)^2,\]
and that $T_n>0$ almost surely. We use the Hubbard-Stratonovich transformation: let $W$ be a random variable with standard normal distribution and which is independent of $(X_{n}^{k})_{1\leq k \leq n}$. Let $n\geq 1$ and let $f$ be a bounded continuous function on $\R$. We put
\[E_n=\E\left[f\left(\frac{W}{n^{1/4}}+\frac{S_{n}}{n^{1/4}\sqrt{T_n}}\right)\right].\]
We introduce $(Y_i)_{i\geq 1}$ a sequence of independent random variables with common distribution $\r$. We have
\begin{multline*}
E_n=\frac{1}{Z_n\sqrt{2\pi}}\,\E\Bigg[\int_{\R}f\left(\frac{w}{n^{1/4}}+\frac{Y_1+\dots+Y_n}{n^{1/4}\sqrt{Y_1^2+\dots+Y_n^2}}\right)\\
\times\exp\left(\frac{1}{2}\frac{(Y_{1}+\dots+Y_{n})^{2}}{Y_{1}^{2}+\dots+Y_{n}^{2}}-\frac{w^2}{2}\right)\ind{\{Y_{1}^{2}+\dots+Y_{n}^{2}>0\}}\,dw\Bigg].
\end{multline*}
We make the change of variable
\[z=\frac{w}{n^{1/4}}+\frac{Y_1+\dots+Y_n}{n^{1/4}\sqrt{Y_1^2+\dots+Y_n^2}}\]
in the integral and we get
\begin{multline*}
E_n=\frac{n^{1/4}}{Z_n\sqrt{2\pi}}\,\E\Bigg[\ind{\{Y_{1}^{2}+\dots+Y_{n}^{2}>0\}}\\
\times\int_{\R}f\left(z\right)\exp\left(-\frac{\sqrt{n}z^2}{2}+zn^{1/4}\frac{Y_1+\dots+Y_n}{\sqrt{Y_1^2+\dots+Y_n^2}}\right)\,dz\Bigg].
\end{multline*}
Let $U_1,\dots,U_n,\eps_1,\dots,\eps_n$ be independent random variables such that the distribution of $U_i$ is~$\r$ and the distribution of $\eps_i$ is $(\d_{-1}+\d_1)/2$, for any $i\in \{1,\dots,n\}$. Since $\r$ is symmetric, the random variables $\eps_1U_1,\dots,\eps_nU_n$ are also independent with common distribution $\r$. As a consequence
\begin{multline*}
E_n=\frac{n^{1/4}}{Z_n\sqrt{2\pi}}\,\E\Bigg[\ind{\{U_{1}^{2}+\dots+U_{n}^{2}>0\}}\\
\times\int_{\R}f\left(z\right)\exp\left(-\frac{\sqrt{n}z^2}{2}+\sum_{i=1}^n\frac{zn^{1/4}\eps_iU_i}{\sqrt{U_1^2+\dots+U_n^2}}\right)\,dz\Bigg].
\end{multline*}
For any $i\in\{1,\dots,n\}$, we denote (in the case where $U_{1}^{2}+\dots+U_{n}^{2}>0$)
\[A_{i,n}=\frac{U_i}{\sqrt{U_1^2+\dots+U_n^2}}.\]
By using Fubini's theorem and the independence of $\eps_i,U_i$, $i\in \{1,\dots,n\}$, we obtain
\begin{align*}
E_n&=\frac{n^{1/4}}{Z_n\sqrt{2\pi}}\,\E\Bigg[\ind{\{U_{1}^{2}+\dots+U_{n}^{2}>0\}}\,\int_{\R}f(z)\exp\left(-\frac{\sqrt{n}z^2}{2}\right)\\
&\qquad\qquad\qquad\qquad\qquad\times \E\left(\,\prod_{i=1}^n\exp\left(zn^{1/4}\eps_iA_{i,n}\right)\,\Bigg|\,(U_1,\dots,U_n)\,\right)\,dz\Bigg].\\
&=\frac{n^{1/4}}{Z_n\sqrt{2\pi}}\,\E\Bigg[\ind{\{U_{1}^{2}+\dots+U_{n}^{2}>0\}}\,\int_{\R}f\left(z\right)\exp\left(-\frac{\sqrt{n}z^2}{2}\right)\\
&\qquad\qquad\qquad\qquad\qquad\times \exp\left(\sum_{i=1}^n\ln\mathrm{cosh}\,(zn^{1/4}A_{i,n})\right)\,dz\Bigg].
\end{align*}
We define the function $g$ by
\[\forall y\in \R\qquad g(y)=\ln\mathrm{cosh}\,y-\frac{y^2}{2}.\]
It is easy to see that $g(y)< 0$ for $y>0$.
We notice that $A_{1,n}^2+\dots+A_{n,n}^2=1$, so that
\[E_n=\frac{n^{1/4}}{Z_n\sqrt{2\pi}}\,\E\left[\ind{\{U_{1}^{2}+\dots+U_{n}^{2}>0\}}\,\int_{\R}f\left(z\right)\exp\left(\sum_{i=1}^n g(zn^{1/4}A_{i,n})\right)\,dz\right].\]
Now we use Laplace's method. Let us examine the convergence of the term in the exponential: for any $i\in \{1,\dots,n\}$, the Taylor-Lagrange formula states that there exists a random variable $\xi_i$ such that
\[g(zn^{1/4}A_{i,n})=-\frac{(zn^{1/4}A_{i,n})^4}{12}+\frac{(zn^{1/4}A_{i,n})^5}{5!}g^{(5)}(\xi_i).\]
By a simple computation, we see that the function $g^{(5)}$ is bounded over $\R$. As a consequence
\begin{multline*}
\sum_{i=1}^n g(zn^{1/4}A_{i,n})=-\frac{z^4}{12}\,\frac{(Y_1^4+\dots+Y_n^4)/n}{((Y_1^2+\dots+Y_n^2)/n)^2}\\
+z^5\,\frac{(Y_1^5+\dots+Y_n^5)/n}{((Y_1^2+\dots+Y_n^2)/n)^{5/2}}\,O\left(\frac{1}{n^{1/4}}\right).
\end{multline*}
By hypothesis, the distribution $\r$ has a finite fifth moment. Hence the law of large numbers implies that
\[\forall z\in \R\qquad \sum_{i=1}^n g(zn^{1/4}A_{i,n})\underset{n\to+\infty}{\longrightarrow}-\frac{\mu_4 z^4}{12\s^4}\qquad \mbox{a.s}.\]

\begin{lem} There exists $c>0$ such that
\[\forall z\in \R\quad \forall n\geq 1\qquad \sum_{i=1}^n g(zn^{1/4}A_{i,n})\leq -\frac{cz^4}{1+z^2/\sqrt{n}}.\]
\end{lem}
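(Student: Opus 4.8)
The plan is to reduce the statement to a single $n$-independent pointwise inequality for $g$ and then to sum it while keeping track of the normalisation $A_{1,n}^2+\dots+A_{n,n}^2=1$.

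First I would prove that there is a constant $c>0$ such that $g(y)\le -c\,y^4/(1+y^2)$ for every $y\in\R$. Since $g$ is even with $g(0)=0$ and (as already noted) $g(y)<0$ for $y\ne 0$, it suffices to check that the even function $y\mapsto g(y)(1+y^2)/y^4$ has a strictly negative supremum on $\R\setminus\{0\}$. From $\ln\cosh y=\frac{y^2}{2}-\frac{y^4}{12}+O(y^6)$ this function tends to $-\frac{1}{12}$ as $y\to 0$, and from $\ln\cosh y=|y|-\ln 2+o(1)$ it tends to $-\frac{1}{2}$ as $|y|\to\infty$; being continuous, strictly negative on $\R\setminus\{0\}$, and bounded away from $0$ near $0$ and near infinity, it has a negative supremum, which we call $-c$. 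This yields the pointwise bound (with equality at $y=0$).

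Next I would substitute $y=zn^{1/4}A_{i,n}$ on the event $\{U_1^2+\dots+U_n^2>0\}$. Writing $a_i=A_{i,n}^2\ge 0$, so that $\sum_i a_i=1$, and $t=z^2\sqrt n\ge 0$, the pointwise bound gives $g(zn^{1/4}A_{i,n})\le -c\,z^4 n\,a_i^2/(1+ta_i)$, and therefore
\[\sum_{i=1}^n g(zn^{1/4}A_{i,n})\le -c\,z^4\,n\sum_{i=1}^n\frac{a_i^2}{1+ta_i}.\]
Applying the Cauchy--Schwarz inequality in the form $\sum_i x_i^2/y_i\ge\big(\sum_i x_i\big)^2/\sum_i y_i$ with $x_i=a_i$ and $y_i=1+ta_i>0$,
\[\sum_{i=1}^n\frac{a_i^2}{1+ta_i}\ge\frac{\big(\sum_i a_i\big)^2}{\sum_i(1+ta_i)}=\frac{1}{n+t},\]
so that $\sum_{i=1}^n g(zn^{1/4}A_{i,n})\le -c\,z^4 n/(n+t)=-c\,z^4/(1+z^2/\sqrt n)$, which is the assertion; the case $z=0$ is trivial.

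The one point to get right is this last aggregation step: estimating each denominator crudely by $1+ta_i\le 1+t$ (using $a_i\le 1$) and then invoking $\sum_i a_i^2\ge 1/n$ would only produce the weaker bound with $1+z^2\sqrt n$ in place of $1+z^2/\sqrt n$, which is not integrable enough in $z$, uniformly in $n$, for the Laplace / dominated-convergence argument that follows. Keeping the weights $a_i$ in the denominators and summing all the terms together via Cauchy--Schwarz is what produces the correct factor $1/\sqrt n$. Equivalently, one may observe that $a\mapsto a^2/(1+ta)$ is convex (its second derivative equals $2/(1+ta)^3$) and apply Jensen's inequality at the mean value $1/n$ of the $a_i$.
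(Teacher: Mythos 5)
Your proposal is correct and follows essentially the same route as the paper: first the pointwise bound $g(y)\le -c\,y^4/(1+y^2)$ obtained by showing that $y\mapsto g(y)(1+y^2)/y^4$ is continuous, negative, and tends to $-1/12$ at $0$ and $-1/2$ at infinity, and then the aggregation over $i$ exploiting $\sum_i A_{i,n}^2=1$. The only cosmetic difference is that you perform the aggregation via the Cauchy--Schwarz (Engel form) inequality, whereas the paper invokes the convexity of $x\mapsto x^2/(1+x)$ and Jensen's inequality --- the two are interchangeable here, as you yourself note in your closing remark.
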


\begin{proof} We define $h$ by
\[\forall y\in \R\backslash\{0\}\qquad h(y)=\frac{1+y^2}{y^4}g(y).\]
It is a negative continuous function on $\R\backslash\{0\}$. Since $g(y)\sim -y^4/12$ in the neighbourhood of~$0$, the function $h$ can be extended to a   function continuous on~$\R$ by putting $h(0)=-1/12$. Next we have
\[\forall y\in \R\backslash\{0\}\qquad h(y)=\frac{1+y^2}{y^2}\times \left(\frac{\ln\mathrm{cosh}\,y}{y^2}-\frac{1}{2}\right),\]
so that $h(y)$ goes to $-1/2$ when $|y|$ goes to $+\infty$.
Therefore $h$ is bounded by some constant $-c$ with $c>0$. Next we easily check that $x\longmapsto x^2/(1+x)$ is convex on $[0,+\infty[$ so that, for any $z\in \R$ and $n\geq 1$,
\begin{align*}
\sum_{i=1}^n g(zn^{1/4}A_{i,n})&\leq -nc\,\frac{1}{n}\sum_{i=1}^n\frac{(zn^{1/4}A_{i,n})^4}{1+(zn^{1/4}A_{i,n})^2}\\
&\leq -nc\,\frac{\left(\frac{1}{n}\sum_{i=1}^n(zn^{1/4}A_{i,n})^2\right)^2}{1+\frac{1}{n}\sum_{i=1}^n(zn^{1/4}A_{i,n})^2}=-\frac{cz^4}{1+z^2/\sqrt{n}},
\end{align*}
since $A_{1,n}^2+\dots+A_{n,n}^2=1$.
\end{proof}
\bigskip

\noindent If $|z|\leq n^{1/4}$ then $1+z^2/\sqrt{n}\leq 2$ and thus, by the previous lemma,
\[\left|\ind{\{U_{1}^{2}+\dots+U_{n}^{2}>0\}}\,\ind{|z|\leq n^{1/4}}\,\exp\left(\sum_{i=1}^n g(zn^{1/4}A_{i,n})\right)\right|\leq \exp\left(-\frac{cz^4}{2}\right).\]
Since
\[\E\left[\int_{\R}\left\|f\right\|_{\infty}\exp\left(-\frac{cz^4}{2}\right)\,dz\right]<+\infty,\]
the dominated convergence theorem implies that
\begin{multline*}
\E\left[\ind{\{U_{1}^{2}+\dots+U_{n}^{2}>0\}}\,\int_{\R}\ind{|z|\leq n^{1/4}}\,f\left(z\right)\exp\left(\sum_{i=1}^n g(zn^{1/4}A_{i,n})\right)\,dz\right]\\
\underset{n\to+\infty}{\longrightarrow}\!\int_{\R}f(z)\exp\left(-\frac{\mu_4 z^4}{12\s^4}\right)\,dz.
\end{multline*}
If $|z|> n^{1/4}$ then $1+z^2/\sqrt{n}\leq 2z^2/\sqrt{n}$ and thus, by the previous lemma,
\[\left|\ind{\{U_{1}^{2}+\dots+U_{n}^{2}>0\}}\,\ind{|z|> n^{1/4}}\,\exp\left(\sum_{i=1}^n g(zn^{1/4}A_{i,n})\right)\right|\leq \exp\left(-\frac{c\sqrt{n}z^2}{2}\right).\]
Hence
\[\E\left[\ind{\{U_{1}^{2}+\dots+U_{n}^{2}>0\}}\,\int_{\R}\ind{|z|> n^{1/4}}\,f\left(z\right)\exp\left(\sum_{i=1}^n g(zn^{1/4}A_{i,n})\right)\,dz\right]\leq\frac{\left\|f\right\|_{\infty}\sqrt{2\pi}}{n^{1/4}\sqrt{c}},\]
and thus
\begin{multline*}
\E\left[\ind{\{U_{1}^{2}+\dots+U_{n}^{2}>0\}}\,\int_{\R}f\left(z\right)\exp\left(\sum_{i=1}^n g(zn^{1/4}A_{i,n})\right)\,dz\right]\\
\underset{n\to+\infty}{\longrightarrow}\int_{\R}f(z)\exp\left(-\frac{\mu_4 z^2}{12\s^4}\right)\,dz.
\end{multline*}
If we take $f=1$, we get
\[\frac{Z_n\sqrt{2\pi}}{n^{1/4}}\underset{n\to+\infty}{\longrightarrow}\int_{\R}\exp\left(-\frac{\mu_4 z^4}{12\s^4}\right)\,dz.\]
We have proved that
\[\frac{W}{n^{1/4}}+\frac{S_{n}}{n^{1/4}\sqrt{T_n}} \overset{\mathscr{L}}{\underset{n \to \infty}{\longrightarrow}} \left(\int_{\R}\exp\left(-\frac{\mu_4 z^4}{12\s^4}\right)\,dz\right)^{-1} \exp\left(-\frac{\mu_{4}}{12 \s^{4}}s^{4}\right)\,ds.\]
Since $(n^{-1/4}W)_{n\geq 1}$ converges in distribution to $0$, Slutsky lemma (theorem~3.9 of~\cite{BillCVPM}) implies that
\[\frac{S_{n}}{n^{1/4}\sqrt{T_n}} \overset{\mathscr{L}}{\underset{n \to \infty}{\longrightarrow}} \left(\int_{\R}\exp\left(-\frac{\mu_4 z^4}{12\s^4}\right)\,dz\right)^{-1} \exp\left(-\frac{\mu_{4}}{12 \s^{4}}s^{4}\right)\,ds.\]
By an ultimate change of variables we compute that
\[\int_{\R}\exp\left(-\frac{\mu_4 z^4}{12\s^4}\right)\,dz=\left(\frac{3\s^{4}}{4\mu_{4}}\right)^{1/4}\G\left(\frac{1}{4}\right).\] 
This ends the proof of theorem~\ref{Maintheo}.\bigskip

\noindent {\bf{Acknowledgement.}}  The second author was supported
partially by NSF grant DMS 1208334.


\begin{thebibliography}{1}

\bibitem{BillCVPM}
Patrick Billingsley.
\newblock {\em {Convergence of probability measures}}.
\newblock {Wiley Series in Probability and Statistics: Probability and
  Statistics}. John Wiley \& Sons Inc., second edition, 1999.
\newblock A Wiley-Interscience Publication.

\bibitem{CerfGorny}
Rapha{\"e}l Cerf and Matthias Gorny.
\newblock {A {C}urie--{W}eiss model of self--organized criticality}.
\newblock {\em The Annals of Probability}, to appear, 2015.

\bibitem{DZ}
Amir Dembo and Ofer Zeitouni.
\newblock {\em {Large deviations techniques and applications}}, volume~38 of
  {\em {Stochastic Modelling and Applied Probability}}.
\newblock Springer-Verlag, 2010.

\bibitem{EN}
Richard~S. Ellis and Charles~M. Newman.
\newblock {Limit theorems for sums of dependent random variables occurring in
  statistical mechanics}.
\newblock {\em Z. Wahrsch. Verw. Gebiete}, 44(2):117--139, 1978.

\bibitem{Gorny3}
Matthias Gorny.
\newblock {The {C}ram{\'e}r condition for the {C}urie--{W}eiss model of {SOC}}.
\newblock {\em Brazilian Journal of Probability and Statistics}, to appear,
  2015.

\bibitem{GORGaussCase}
Matthias Gorny.
\newblock {A {C}urie--{W}eiss model of self--organized {C}riticality: the
  {G}aussian case}.
\newblock {\em Markov processes and related fields}, 20(3):563--576, 2014.

\end{thebibliography}
\end{document}